\author{{\L}ukasz Garncarek}
\title{Property of rapid decay for extensions of compactly generated groups}
\subjclass[2010]{22D15, 43A15, 20F65} 
\keywords{property RD, group extension}
\address{University of Wroc{\l}aw, Institute of Mathematics,
  pl.~Grunwaldzki 2/4, 50-384 Wroc\-{\l}aw, Poland}
\email{Lukasz.Garncarek@math.uni.wroc.pl}
\thanks{During part of the work on this paper the author was supported
  by a scholarship of the Foundation for Polish Science. Part of this
  work was conducted during author's internship at the Warsaw Center
  of Mathematics and Computer Science. Research partially supported by
  the contract with University of Wrocław 2139/M/IM/12 and by the
  Polish National Science Center grant 2012/06/A/ST1/00259}
\newtheorem{theorem}{Theorem}[section]
\newtheorem{lemma}[theorem]{Lemma}
\newtheorem{proposition}[theorem]{Proposition}
\newtheorem{corollary}[theorem]{Corollary}
\theoremstyle{definition}
\newtheorem*{problem}{Problem}
\theoremstyle{remark}
\newtheorem{remark}[theorem]{Remark}
\newcommand{\R}{\mathbb{R}}
\newcommand{\Z}{\mathbb{Z}}
\DeclareMathOperator{\Aut}{Aut}
\newcommand{\norm}[1]{\left\lVert#1\right\rVert}
\newcommand{\abs}[1]{\left\lvert#1\right\rvert}
\newcommand{\then}{\Rightarrow}
\begin{document}

\begin{abstract}
  In the article we settle down the problem of permanence of property
  RD under group extensions. We show that if $1\to N\to G\to Q\to 1$
  is a short exact sequence of compactly generated groups such that
  $Q$ has property RD, and $N$ has property RD with respect to the
  restriction of a word-length on $G$, then $G$ has property RD. 

  We also generalize the result of Ji and Schweitzer stating that
  locally compact groups with property RD are unimodular. Namely, we
  show that any automorphism of a locally compact group with property
  RD which distorts distances subexponentially, preserves the Haar measure.
\end{abstract}

\maketitle

\section{Introduction}
\label{sec:introduction}

The property of rapid decay (property RD for short) is a certain
estimate for the norm of convolution operators in terms of a
Sobolev-type norm corresponding to a length function on a group. Its
study was begun by Haagerup, who established it for free groups in
\cite{Haagerup1978}. The basic theory was later developed by
Jolissaint in \cite{Jolissaint1989,Jolissaint1990}. 

Property RD takes its name from an equivalent formulation, which
states that the space of rapidly decreasing (with respect to some
length) functions on a group naturally embeds in its reduced
$C^*$-algebra. It turns out that this embedding induces isomorphisms
in $K$-theory. This result found an application in the
Connes--Moscovici proof of the Novikov conjecture for hyperbolic
groups \cite{Connes1990}.


In the paper \cite{Jolissaint1990} a partial result on permanence of
property RD under group extensions is proved. Proposition 2.1.9
therein treats a special case of an extension $1\to N\to G\to Q \to 1$
of finitely generated groups, satisfying technical conditions of
polynomial amplitude and polynomial growth of certain associated
functions. In Section~\ref{sec:final-remarks} we show that these
assumptions are equivalent to $N$ being polynomially distorted in
$G$. Other related results are \cite[Proposition 1.14]{ChatterjiPhD},
dealing with split extensions of finitely generated groups, and
\cite[Proposition 5.5]{Chatterji2007}, dealing with polynomially
distorted central extensions of compactly generated groups.

The main aim of this article is to provide a permanence result for
general extensions of compactly generated groups. We show that for a
compactly generated group $G$ to have property RD, it is sufficient
that it contains a normal subgroup $N$ satisfying property RD with
respect to the restriction of a word-length of $G$, such that the
quotient $G/N$ has property RD. The proof is an adaptation of the
proof of \cite[Proposition 2.1.9]{Jolissaint1990}, based on a more
careful choice of an auxiliary cross-section of the short exact
sequence.

In the course of the proof, we first need to show that our extension
is unimodular. We achieve this by proving a generalization of the
result of Ji and Schweitzer \cite[Theorem 2.2]{Ronghui1996} stating
that groups with property RD are unimodular. This is equivalent to
saying that inner automorphisms are measure-preserving, and we extend
this statement to all automorphisms which distort the length
subexponentially in a certain sense.

The text is organized as follows. Section~\ref{sec:property-rd}
introduces the basic notions associated to property RD. In
Section~\ref{sec:unimodularity} we prove an inequality about length
distortion of automorphisms of groups with property RD, and use it to
show that an extension satisfying the assumptions of our main theorem
is unimodular. Section~\ref{sec:permanence} contains the proof of the
main result, while Section~\ref{sec:final-remarks} is devoted to the
discussion of some examples and the relation of our work to that of
Jolissaint.

\section{Property RD}
\label{sec:property-rd}

Let $G$ be a locally compact group. We will always endow groups with
their right-invariant Haar measures. A \emph{length} on $G$ is a Borel
function $\ell\colon G\to [0,\infty)$ such that
\begin{enumerate}
\item $\ell(1)=0$,
\item $\ell(x^{-1})=\ell(x)$,
\item $\ell(xy)\leq\ell(x)+\ell(y)$.  
\end{enumerate}
If $G$ is generated by a subset $S$, then the word-length
\begin{equation}
\ell(x)=\min\{n : x\in (S^{\pm1})^n\}
\end{equation}
is an example of a length. When speaking about word-lengths we will
always assume that the set $S$ is relatively compact, and in
particular, that the group is compactly generated.

Suppose $\ell_1$ and $\ell_2$ are two lengths on $G$. We say that $\ell_1$
\emph{dominates} $\ell_2$ if there exist constants $r,C>0$ such that
\begin{equation}
  \label{eq:def-dominates}
  \ell_2(x) \leq C(1+\ell_1(x))^r
\end{equation}
for all $x\in G$. If $\ell_2$ also dominates $\ell_1$, they are said
to be \emph{equivalent}. By \cite[Theorem 1.2.11]{Schweitzer1993}, any
length function is bounded on compact sets. Therefore, if $G$ is
compactly generated, all its word-lengths are equivalent and dominate
all other lengths.

The algebra $C_c(G)$ of compactly supported continuous functions on
$G$ acts faithfully on the Hilbert space $L^2(G)$ by left convolution
operators $T_f$, where
\begin{equation}
  \label{eq:1}
  T_fg(x) = \int_G f(y^{-1})g(yx)\,dy.
\end{equation}
This induces the operator norm $
$$\norm{\cdot}_{op}$ on $C_c(G)$. The group $G$ is said to satisfy
\emph{property RD with respect to a length function $\ell$} if there
exist constants $s,C>0$ such that for every $f\in C_c(G)$
 \begin{equation}
  \label{eq:def-RD}
  \norm{f}_{op}\leq C(1+\ell(f))^s \norm{f}_2,  
\end{equation}
where $\ell(f)=\sup\{\ell(x) : f(x)\ne 0\}$, and $\norm{\cdot}_2$
stands for the  $L^2$-norm. We will also later employ the notation
$\ell(U) = \sup\{\ell(x) : x\in U\}$ for $U\subseteq G$. 

If $\ell_1$ and $\ell_2$ are two lengths on $G$ such that $\ell_1$
dominates $\ell_2$ and $G$ has property RD with respect to $\ell_2$,
then clearly it has property RD with respect to $\ell_1$.  Hence, if a
compactly generated group has property RD with respect to one length,
then it has property RD with respect to any of its word-lengths. In
this case, it is said that $G$ has \emph{property RD}.
Finally, since by \cite{Ronghui1996} property RD implies
unimodularity, using a left Haar measure leads to exactly the same notion.

\section{Unimodularity of the extension}
\label{sec:unimodularity}

In this section we exhibit an inequality satisfied by automorphisms of
groups with property RD, which, when applied to inner automorphisms,
generalizes the result of Ji and Schweitzer, stating that groups with
property RD are unimodular \cite{Ronghui1996}. We use it to infer
that in an extension $1\to N\to G\to Q\to 1$, where $Q$ has property
RD, and $N$ has property RD with respect to the restriction of the
word-length of $G$, the group $G$ is unimodular. This allows to
apply a lemma of Jolissaint to show that $G$ has property RD.

For a locally compact group $G$ we will denote by $\Aut(G)$ the group
of all topological automorphisms of $G$. If $\rho$ is a right Haar
measure on $G$, then the modular homomorphism
$\Delta_G\colon\Aut(G)\to \R_+$ is defined by $\alpha_*\rho =
\Delta_G(\alpha)\rho$.

\begin{theorem}
  \label{thm:rd-obstruction}
  Suppose that a locally compact group $G$ satisfies property RD with respect to a length $\ell$
  with exponent $s$. Let $\alpha\in\Aut(G)$. Then for any relatively
  compact open set $U\subseteq G$ there exists $D>0$ such that
  \begin{equation}
    \label{eq:distortion-comparison}
    \Delta_G(\alpha)^{n} \leq D(1+
    \ell(\alpha^{-n}(U)))^{2s} 
  \end{equation}
  for all $n\in\Z$.
\end{theorem}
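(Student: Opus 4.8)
The plan is to exploit the fact that the automorphism $\alpha$ induces, after the appropriate measure-theoretic normalization, a unitary operator on $L^2(G)$ whose conjugation action carries convolution operators to convolution operators. Concretely, I would define $U_\alpha\colon L^2(G)\to L^2(G)$ by $U_\alpha g = \Delta_G(\alpha)^{1/2}\,(g\circ\alpha^{-1})$; a change of variables, using that $\alpha$ scales the right Haar measure by $\Delta_G(\alpha)^{-1}$, shows that $U_\alpha$ is unitary. The first substantive step is then to compute the conjugate $U_\alpha T_f U_\alpha^{-1}$ for $f\in C_c(G)$ and to verify that it equals $T_{f'}$ with $f' = \Delta_G(\alpha)\,(f\circ\alpha^{-1})$. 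Since unitary conjugation preserves the operator norm, this yields the key identity $\norm{f\circ\alpha^{-1}}_{op} = \Delta_G(\alpha)^{-1}\norm{f}_{op}$.

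Next I would record the elementary scaling of the $L^2$-norm under precomposition with an automorphism: the same change of variables gives $\norm{f\circ\alpha^{-1}}_2 = \Delta_G(\alpha)^{-1/2}\norm{f}_2$. Both identities hold with $\alpha$ replaced by an arbitrary power $\alpha^n$, using that $\Delta_G$ is a homomorphism, so that $\Delta_G(\alpha^n)=\Delta_G(\alpha)^n$ for every $n\in\Z$.

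Now fix a nonzero $f\in C_c(G)$ with support contained in $U$; such an $f$ exists by Urysohn's lemma since $U$ is open (we may assume $U\ne\emptyset$, the empty case being trivial). Set $f_n = f\circ\alpha^n$, which lies in $C_c(G)$ with support $\alpha^{-n}(\operatorname{supp} f)\subseteq\alpha^{-n}(U)$; as $\alpha^{-n}$ is a homeomorphism this support is relatively compact, so $\ell(f_n)\leq\ell(\alpha^{-n}(U))<\infty$. The two identities above, applied to $\alpha^{-n}$, give $\norm{f_n}_{op} = \Delta_G(\alpha)^n\norm{f}_{op}$ and $\norm{f_n}_2 = \Delta_G(\alpha)^{n/2}\norm{f}_2$. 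Feeding $f_n$ into the property RD estimate $\norm{f_n}_{op}\leq C(1+\ell(f_n))^s\norm{f_n}_2$ and cancelling a factor of $\Delta_G(\alpha)^{n/2}$ yields $\Delta_G(\alpha)^{n/2}\norm{f}_{op}\leq C(1+\ell(\alpha^{-n}(U)))^s\norm{f}_2$; squaring and setting $D = C^2\norm{f}_2^2/\norm{f}_{op}^2$ (finite and positive, as $f\ne 0$ and the convolution action is faithful) produces the claimed inequality uniformly in $n$.

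The main obstacle is the conjugation computation in the first step: one must keep careful track of the modular factors and of the right-invariance of the Haar measure under the several substitutions, since a misplaced power of $\Delta_G(\alpha)$ would destroy the clean cancellation that makes the final bound uniform in $n$. Once that identity is established with the correct normalization, the remainder is bookkeeping, and the single estimate derived above covers all integers $n$ at once, with no separate argument needed for negative $n$.
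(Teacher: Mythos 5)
Your proof is correct, and it runs on the same engine as the paper's: precomposition with $\alpha^n$ scales the $L^2$-norm by $\Delta_G(\alpha)^{n/2}$ but the convolution side by $\Delta_G(\alpha)^{n}$, and feeding this mismatch into the RD estimate forces $\Delta_G(\alpha)^{n/2}$ to be controlled by $C(1+\ell(\alpha^{-n}(U)))^s$ times a constant, whence the exponent $2s$ after squaring. The implementation differs in one genuine respect. The paper never isolates the operator norm: it uses the identity $(f\circ\alpha)*(g\circ\alpha)=\Delta_G(\alpha)\,(f*g)\circ\alpha$ together with a second test function $g$, applies RD in the form $\norm{f*g}_2\leq C(1+\ell(f))^s\norm{f}_2\norm{g}_2$, and divides at the end by $\norm{f*g}_2$ for a pair with nonvanishing convolution. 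You instead extract the exact transformation law $\norm{f\circ\alpha^{-1}}_{op}=\Delta_G(\alpha)^{-1}\norm{f}_{op}$ by conjugating $T_f$ with the unitary $U_\alpha g=\Delta_G(\alpha)^{1/2}(g\circ\alpha^{-1})$; the computation $U_\alpha T_f U_\alpha^{-1}=T_{\Delta_G(\alpha)\,f\circ\alpha^{-1}}$ checks out with the normalization $\alpha_*\rho=\Delta_G(\alpha)\rho$. This eliminates the auxiliary function $g$, makes the constant $D=C^2\norm{f}_2^2/\norm{f}_{op}^2$ explicit (and in fact slightly sharper, since $\norm{f*g}_2\leq\norm{f}_{op}\norm{g}_2$), and replaces the paper's appeal to $f*g\neq 0$ by the faithfulness of the convolution representation to guarantee $\norm{f}_{op}>0$. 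The only quibble is your dismissal of $U=\emptyset$ as trivial: in that degenerate case there is no nonzero $f$ supported in $U$ and the inequality would force $\Delta_G(\alpha)=1$; but the statement is clearly meant for nonempty $U$, and the paper's own proof makes the same tacit assumption, so nothing is lost.
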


\begin{proof}
  Observe that for $f,g\in C_c(G)$
  \begin{equation}
    \label{eq:conv-identity}
    (f\circ\alpha)* (g\circ\alpha) = \Delta_G(\alpha)
    (f*g)\circ\alpha,
  \end{equation}
  and 
  \begin{equation}
    \label{eq:aut-norm}
    \norm{f\circ\alpha}_2 = \Delta_G(\alpha)^{1/2}\norm{f}_2.
  \end{equation}
  It follows that 
  \begin{equation}
    \label{eq:conv-ineq}
    \begin{split}
      \Delta_G(\alpha)^{n/2} \lVert f*g\rVert_2= &
      \lVert{(f*g)\circ\alpha^n}\rVert_2 = \Delta_G(\alpha)^{-n}
      \lVert{(f\circ\alpha^n)*(g\circ\alpha^n)\rVert}_2 \leq\\
      \leq & C(1+\ell(f\circ\alpha^n))^s \Delta_G(\alpha)^{-n} \lVert f\circ
      \alpha^n \rVert_2 \lVert g\circ\alpha^n\rVert_2 =\\
      =& C(1+\ell(f\circ\alpha^n))^s \norm{f}_2\norm{g}_2.
    \end{split}
  \end{equation}
  For arbitrarily chosen positive $f,g\in C_c(G)$ with $f$ vanishing
  outside $U$, the convolution $f*g$ is nonzero, and
  \begin{equation}
    \Delta_G(\alpha)^n \leq (1+\ell(f\circ\alpha^n))^{2s} 
    \left( \frac{C\norm{f}_2\norm{g}_2}{\norm{f*g}_2}\right)^2,
  \end{equation}
  which, after noticing that $\ell(f\circ\alpha^n) \leq
  \ell(\alpha^{-n}(U))$, yields the desired inequality.
\end{proof}

If $\alpha(x)=axa^{-1}$ is an inner automorphism of $G$, then we have
$\ell(\alpha^n(x)) \leq \ell(x)+2\abs{n}\ell(a)$. By
Theorem~\ref{thm:rd-obstruction}, if $G$ has property RD, then the
function $(\rho(\alpha(U))/\rho(U))^n$ is bounded by a polynomial in
$\lvert n\rvert$, and hence $\alpha$ is measure-preserving. Thus this
theorem indeed generalizes the result on unimodularity of groups with
property RD.

Now, consider a short exact sequence 
 $1\to N\to G\overset{\pi}{\to} Q\to 1$ of compactly generated groups. By \cite[Lemma
1.1]{Mackey1952}, there exists a Borel cross-section $\sigma\colon
Q\to G$. There are two Borel functions associated to the choice of
$\sigma$, namely $\beta\colon Q\times Q\to N$ and $\theta\colon
Q\to\Aut(N)$, defined by
\begin{equation}
  \label{eq:def-f}
  \beta(p,q)=\sigma(p)\sigma(q)\sigma(pq)^{-1}  
\end{equation}
and
\begin{equation}
  \label{eq:def-rho}
  \theta(q)(n)=\sigma(q)n\sigma(q)^{-1}.  
\end{equation}
Using $\sigma$ we obtain a Borel identification $N\times Q\to G$ given by
$(n,q)\mapsto n\sigma(q)$, which we will use freely without any further
mention. The multiplication on $G$ can be now expressed by
\begin{equation}
  \label{eq:def-mult-ext}
  (m,p)(n,q)=(m\theta(p)(n)\beta(p,q),pq).  
\end{equation}
Moreover, the right Haar measure of $G$ is the product of right Haar
measures of $N$ and $Q$.

\begin{lemma}
  \label{thm:unimodularity-conditions}
  Suppose that $1\to N\to G \to Q\to 1$ is a short exact sequence of
  compactly generated groups. If $N$ and $Q$ are unimodular, then $G$
  is unimodular if and only if the automorphisms $\theta(q)$ defined
  by~(\ref{eq:def-rho}) are measure-preserving.
\end{lemma}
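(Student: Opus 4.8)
The plan is to express unimodularity of $G$ as the statement that the product right Haar measure $\rho=\rho_N\times\rho_Q$ on $N\times Q\cong G$ is left-invariant, and then to compute directly how left translation distorts $\rho$ using the multiplication rule~(\ref{eq:def-mult-ext}). Since $\rho$ is already right-invariant, $G$ is unimodular precisely when $\rho$ is simultaneously left-invariant, so it suffices to evaluate, for a fixed $g_0=(m,p)\in G$ and $f\in C_c(G)$, the integral $\int_G f(g_0 g)\,d\rho(g)$ and to show that it equals $\Delta_N(\theta(p))\int_G f\,d\rho$, where $\Delta_N\colon\Aut(N)\to\R_+$ is the modular homomorphism of $N$ introduced above.

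First I would apply Fubini and split the translated argument according to the two factors. By~(\ref{eq:def-mult-ext}) the element $g_0(n,q)$ has $Q$-component $pq$ and $N$-component $m\,\theta(p)(n)\,\beta(p,q)$. Integrating first over $n\in N$ with $q$ fixed, I would read the $N$-component as the composition of the automorphism $\theta(p)$ with a left translation by $m$ and a right translation by $\beta(p,q)$. Right translation preserves $\rho_N$ by right-invariance; left translation by $m$ preserves $\rho_N$ because $N$ is unimodular; and the automorphism $\theta(p)$ scales $\rho_N$ by exactly $\Delta_N(\theta(p))$ via the defining relation $\theta(p)_*\rho_N=\Delta_N(\theta(p))\rho_N$. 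Crucially, the cocycle term $\beta(p,q)$ contributes nothing, and the resulting factor $\Delta_N(\theta(p))$ is independent of both $q$ and $m$, so it survives the subsequent integration over $q$. That outer integration amounts to a left translation by $p$ on $Q$, which preserves $\rho_Q$ since $Q$ is unimodular. Combining the two factors yields $\int_G f(g_0 g)\,d\rho(g)=\Delta_N(\theta(p))\int_G f\,d\rho$.

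The conclusion is then immediate: the right Haar measure $\rho$ is left-invariant if and only if $\Delta_N(\theta(p))=1$ for every $p\in Q$, which is exactly the condition that all the automorphisms $\theta(q)$ be measure-preserving. I expect the main subtlety to lie not in any single estimate but in the bookkeeping of the change-of-variables factors: keeping the conventions for the modular homomorphism consistent and verifying that the $q$-dependent term $\beta(p,q)$ genuinely drops out, so that Fubini applies with a factor that is constant in the outer variable. Once that is checked, unimodularity of both $N$ and $Q$ makes every remaining contribution trivial and isolates $\Delta_N(\theta(p))$ as the sole obstruction.
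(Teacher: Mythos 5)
Your proposal is correct and follows essentially the same route as the paper: both compute $\int_G f(g_0 g)\,d\rho(g)$ via the multiplication formula~(\ref{eq:def-mult-ext}), absorb the translations by $m$ and $\beta(p,q)$ and the shift $q\mapsto pq$ using unimodularity of $N$ and $Q$, and isolate $\Delta_N(\theta(p))$ as the only remaining factor. The extra care you take with the order of integration and the $q$-dependence of $\beta(p,q)$ is exactly the bookkeeping the paper leaves implicit.
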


\begin{proof}
  Let $f\in C_c(G)$. For $(m,p)\in G$ we have
  \begin{equation}
    \begin{split}
      \int_N\int_Q & f((m,p)(n,q))\,dqdn = \int_N\int_Q f
      (m\theta(p)(n)\beta(p,q),pq)\,dqdn = \\
      = & \int_N\int_Q f(\theta(p)(n),q)\,dqdn =
      \Delta_N(\theta(p))\int_N\int_Q f(n,q)\,dqdn,
    \end{split}
  \end{equation}
  so the right Haar measure on $G$ is left-invariant if and only if
  $\Delta_n(\theta(p))=1$ for all $p\in Q$.
\end{proof}

\begin{corollary}
  \label{thm:unimodularity-extension}
  If in the short exact sequence $1\to N\to G \to Q\to 1$ of compactly
  generated groups $Q$ is unimodular, and $N$ has property RD with respect to the
  restriction of a word-length of $G$, then $G$ is unimodular.
\end{corollary}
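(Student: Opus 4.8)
The plan is to combine Lemma~\ref{thm:unimodularity-conditions} with Theorem~\ref{thm:rd-obstruction}. By the lemma, since $Q$ is unimodular and $N$ is unimodular (as it has property RD, and property RD implies unimodularity by \cite{Ronghui1996}), it suffices to show that each automorphism $\theta(q)\in\Aut(N)$ defined by~(\ref{eq:def-rho}) is measure-preserving, i.e.\ that $\Delta_N(\theta(q))=1$ for every $q\in Q$. The idea is to apply Theorem~\ref{thm:rd-obstruction} to the group $N$ with its property-RD length, taking $\alpha=\theta(q)$, and to show that the right-hand side of~(\ref{eq:distortion-comparison}) grows at most polynomially in $\lvert n\rvert$; this forces $\Delta_N(\theta(q))^n$ to be polynomially bounded in $\lvert n\rvert$, which is impossible unless $\Delta_N(\theta(q))=1$.

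\medskip

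First I would fix the relevant length. The hypothesis gives property RD of $N$ with respect to $\ell_N$, the restriction to $N$ of a word-length $\ell_G$ of $G$ associated to some relatively compact generating set; let $s$ be the corresponding exponent. Now fix $q\in Q$ and set $a=\sigma(q)$, so that $\theta(q)(n)=ana^{-1}$ as elements of $G$. The key observation is that the iterates of $\theta(q)$ are controlled by conjugation in the ambient group $G$: for $n\in N$ one has $\theta(q)^k(n)=a^k n a^{-k}$ inside $G$, and therefore, by subadditivity of the word-length $\ell_G$,
\begin{equation}
  \ell_G(\theta(q)^k(n)) \leq \ell_G(n) + 2\lvert k\rvert\,\ell_G(a).
\end{equation}
Restricting to $N$, this says $\ell_N(\theta(q)^k(n))\leq \ell_N(n)+2\lvert k\rvert\,\ell_G(a)$, exactly as in the inner-automorphism estimate used after Theorem~\ref{thm:rd-obstruction}.

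\medskip

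Next I would feed this into the theorem. Choose any fixed relatively compact open set $U\subseteq N$ and let $R=\ell_N(U)<\infty$, which is finite since word-lengths are bounded on compact sets by \cite[Theorem 1.2.11]{Schweitzer1993}. Applying the displayed estimate with $k=-n$ to every point of $U$ gives
\begin{equation}
  \ell_N(\theta(q)^{-n}(U)) \leq R + 2\lvert n\rvert\,\ell_G(a).
\end{equation}
Theorem~\ref{thm:rd-obstruction}, applied to $N$ with $\alpha=\theta(q)$, then yields a constant $D>0$ with
\begin{equation}
  \Delta_N(\theta(q))^{n} \leq D\bigl(1+R+2\lvert n\rvert\,\ell_G(a)\bigr)^{2s}
\end{equation}
for all $n\in\Z$. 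The right-hand side is polynomial in $\lvert n\rvert$, so if $\Delta_N(\theta(q))>1$ the left-hand side would grow exponentially as $n\to+\infty$, a contradiction, and if $\Delta_N(\theta(q))<1$ we get the same contradiction by letting $n\to-\infty$ (equivalently replacing $\theta(q)$ by its inverse). Hence $\Delta_N(\theta(q))=1$ for every $q\in Q$, and Lemma~\ref{thm:unimodularity-conditions} gives that $G$ is unimodular.

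\medskip

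The step I expect to require the most care is the passage between conjugation in $G$ and the automorphism $\theta(q)$ of $N$: one must check that $\theta(q)^k(n)=\sigma(q)^k n\sigma(q)^{-k}$ holds as an identity in $N$ even though $\sigma$ is only a Borel cross-section and the cocycle $\beta$ is nontrivial. This is fine because $\theta$ measures honest conjugation by $\sigma(q)$ regardless of the cocycle, so its iterates are conjugation by powers of $\sigma(q)$; the only subtlety is that $\theta$ need not be a homomorphism $Q\to\Aut(N)$, but we only ever iterate a single fixed $\theta(q)$, for which $\theta(q)^k$ is genuinely the $k$-th power in $\Aut(N)$ and agrees with conjugation by $\sigma(q)^k$. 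One should also record that $N$ is unimodular as a standing consequence of its property RD, which is needed to invoke the lemma.
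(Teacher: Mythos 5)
Your proposal is correct and follows essentially the same route as the paper: reduce to showing $\Delta_N(\theta(q))=1$ via Lemma~\ref{thm:unimodularity-conditions}, bound $\ell_G(\theta(q)^{-k}(n))$ by $\ell_G(n)+2\abs{k}\ell_G(\sigma(q))$ using that $\theta(q)$ is conjugation by $\sigma(q)$ in $G$, and conclude from Theorem~\ref{thm:rd-obstruction} that $\Delta_N(\theta(q))^k$ is polynomially bounded, hence equal to $1$. The extra care you take with the iterates of $\theta(q)$ and with the choice of $U$ is sound but not a departure from the paper's argument.
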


\begin{proof}
  By Lemma~\ref{thm:unimodularity-conditions}, it is enough to observe
  that the automorphisms $\theta(q)$ associated with the section
  $\sigma$ are measure-preserving. For $n\in N$ we have
  \begin{equation}
    \ell_G(\theta(q)^{-k}(n)) \leq \ell_G(n) + 2\abs{k} \ell_G(\sigma(q)),
  \end{equation}
  and thus by Theorem~\ref{thm:rd-obstruction}, the sequence
  $\Delta_N(\theta(q))^k$, with $k\in\Z$, is bounded by a polynomial
  in $\abs{k}$. This is only possible if $\Delta_N(\theta(q))=1$.
\end{proof}

\section{Permanence of property RD under extensions}
\label{sec:permanence}

Again, let $1\to N\to G\overset{\pi}{\to} Q\to 1$ be a short exact
sequence of compactly generated groups. Endow $G$ with a word-length
$\ell_G$ corresponding to a relatively compact generating set
$S$. Denote by $\ell_Q$ the word-length on $Q$ corresponding to
$\pi(S)$. The main result of this article is the following theorem,
the proof of which we postpone until the end of this section.

\begin{theorem}
  \label{thm:main-thm}
  If in the short exact sequence $1\to N\to G \to Q\to 1$ of
  compactly generated groups $Q$ has property RD with respect to its
  word-length, and $N$ has property RD with respect to the restriction
  of the word-length of $G$, then $G$ has property RD.
\end{theorem}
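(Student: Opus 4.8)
The plan is to prove that $G$ has property RD with respect to its word-length $\ell_G$, by estimating the operator norm of a convolution operator $T_f$ for $f \in C_c(G)$ in terms of $\ell_G(f)$ and $\|f\|_2$. Using the Borel identification $G \cong N \times Q$ via $(n,q) \mapsto n\sigma(q)$, I would decompose $f$ according to the value of the word-length $\ell_Q$ of the image in $Q$. Specifically, write $f = \sum_{k} f_k$ where $f_k$ is supported on the slab where $\ell_Q(\pi(\cdot)) = k$; since $f \in C_c(G)$ and $\ell_G(f) \leq L$ say, the relevant range of $k$ is bounded by $L$ (as $\ell_Q$ is dominated by the restriction of $\ell_G$). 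The operator norm of $T_f$ should then be controlled by a sum over these pieces, and the main game is to bound each $\|T_{f_k}\|_{op}$ using the two hypotheses: property RD of $Q$ to handle the $Q$-direction, and property RD of $N$ (with respect to the restriction of $\ell_G$) to handle the fibers.

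The key technical step is to estimate $\|T_f g\|_2$ for a test function $g \in C_c(G)$ by viewing $f*g$ through the fibered structure. Writing out the convolution using the multiplication rule~\eqref{eq:def-mult-ext}, the convolution in $G$ becomes, along the $Q$-coordinate, essentially a convolution on $Q$ whose ``coefficients'' are themselves functions on $N$ twisted by the automorphisms $\theta(p)$ and the cocycle $\beta$. Since $G$ is unimodular by Corollary~\ref{thm:unimodularity-extension} (applying it with $Q$ unimodular, which follows from property RD of $Q$ and the result of Ji and Schweitzer cited in Section~\ref{sec:property-rd}), the Haar measure is the product measure and there are no modular corrections to track. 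I would then apply property RD of $Q$ to the $Q$-convolution, treating the $L^2(N)$-norms of the fiber functions as scalar coefficients, reducing the problem to bounding these fiber norms by the property RD estimate for $N$.

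The crucial and most delicate ingredient — signalled by the authors' remark that the proof rests on ``a more careful choice of an auxiliary cross-section'' — is controlling how the word-length $\ell_G$ behaves under the identification $G \cong N \times Q$. The obstacle is that the generic Borel section $\sigma$ from Mackey's lemma gives no control on $\ell_G(\sigma(q))$, and the automorphisms $\theta(q)$ and cocycle $\beta(p,q)$ can distort lengths in the fiber $N$ uncontrollably as $q$ ranges over $Q$. To make the fibered estimate close, one needs a section $\sigma$ for which $\ell_G(\sigma(q))$ grows at most linearly (or at least comparably) with $\ell_Q(q)$, so that an element $n\sigma(q)$ with $\ell_G(n\sigma(q)) \leq L$ has both its $N$-part of length $O(L)$ in the restricted word-length and its $Q$-part of length $O(L)$ in $\ell_Q$. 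This quasi-isometric control of the section is what lets the support conditions on $f$ translate into simultaneous support bounds in $N$ and $Q$, so that the two property-RD estimates can be combined.

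With such a section in hand, I expect the final assembly to be a Cauchy--Schwarz argument across the $Q$-slabs: bound $\|T_f\|_{op} \leq \sum_k \|T_{f_k}\|_{op}$, bound each term by a constant times a power of $k$ (from property RD of $Q$) times the $L^2$-norm of the corresponding fiber piece (controlled by property RD of $N$), and then sum over the $O(L)$ values of $k$ using Cauchy--Schwarz to recover a bound of the form $C(1+L)^{s'}\|f\|_2$ for a suitable exponent $s'$ depending on the RD exponents of $N$ and $Q$. The main obstacle throughout is the bookkeeping that keeps the distortion of lengths under $\sigma$, $\theta$, and $\beta$ uniformly polynomial; the unimodularity established in Section~\ref{sec:unimodularity} is precisely what removes the analytic obstruction that would otherwise prevent this bookkeeping from closing.
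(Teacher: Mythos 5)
Your proposal is correct and follows essentially the same route as the paper: a good Borel cross-section with $\ell_G(\sigma(q))=\ell_Q(q)$ to control lengths under the identification $G\cong N\times Q$, unimodularity via Corollary~\ref{thm:unimodularity-extension}, and a fibered estimate of $f*g$ that applies property RD of $N$ in the fibers and property RD of $Q$ to the resulting convolution of the fiberwise $L^2$-norms. The preliminary decomposition of $f$ into $\ell_Q$-slabs is superfluous (the paper runs the fibered estimate directly on $f$), but it does no harm.
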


We have already remarked that there exists a Borel section
of the quotient map $\pi$. The proof in \cite{Mackey1952} actually
yields more. The next simple observation will be crucial---choosing
the right cross-section will allow to drop the unnecessary assumptions
of \cite[Proposition 2.1.9]{Jolissaint1990}.

\begin{remark}
  \label{thm:good-section}
  There exists a Borel cross-section $\sigma\colon Q\to G$ of $\pi$ such that
  for every $q\in Q$ we have $\ell_G(\sigma(q)) =\ell_Q(q)$.
\end{remark}

Indeed, since we have $\ell_Q(\pi(g)) \leq \ell_G(g)$, and in
consequence $\ell_Q(q)\leq\ell_G(\sigma(q))$, we need to construct a
cross-section $\sigma$ such that $\sigma(\pi(S)^k)\subseteq S^k$. But
the proof of Mackey actually follows by first decomposing $G$ into an
increasing union of compact sets $K_n$ such that every compact subset
$K\subseteq G$ is contained in some $K_n$, and then constructing an
increasing family of sections of the restrictions $\pi|_{K_n}$, using
the Federer-Morse theorem. We may therefore put
$K_n=S^n$, and we just need to notice that every compact subset
$K\subseteq G$ is contained in $S^n$ for some $n$. This is clear, as
$S^k$ has positive measure for some $k$, and the convolution
$\chi_{S^k}*\chi_{S^k}$ is nonzero, continuous (as convolution of
square-integrable functions), and vanishing outside $S^{2k}$. Hence
$S^{2k}$ has nonempty interior and generates $G$, so the claim follows.

The next ingredient in the proof of Theorem~\ref{thm:main-thm} is a
generalization of \cite[Lemma 2.1.2]{Jolissaint1990}, which was
formulated in the setting of discrete groups. The proof, adapted
from~\cite{Jolissaint1990}, works for arbitrary lengths on $G$ and
$Q$, not only for word-lengths.

\begin{lemma}
  \label{thm:length-on-extension} Let $1\to N\to G\to
  Q\overset{\pi}{\to} 1$ be a short exact sequence of compactly
  generated groups, endowed with lengths $\ell_N$, $\ell_G$, and
  $\ell_Q$, where $\ell_N$ is arbitrary, $\ell_G$ is a word-length
  corresponding to a relatively compact generating set $S$, and
  $\ell_Q$ is the word-length corresponding to $\pi(S)$. Suppose that
  \begin{enumerate}
  \item $N$ has property RD with respect to $\ell_N$,
  \item $Q$ has property RD with respect to $\ell_Q$,
  \item $G$ is unimodular,
  \item there exist constants $D,r>0$ such that for any $(n,q)\in G$
    we have
    \begin{equation}
      \ell_N(n)+ \ell_Q(q) \leq D\ell_G(n,q)^r.
    \end{equation}
  \end{enumerate}
  Then $G$ has property RD with respect to $\ell_G$.
\end{lemma}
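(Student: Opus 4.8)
The plan is to verify property RD for $G$ directly from the definition: given $f\in C_c(G)$, I will bound $\norm{f}_{op}$ by $C(1+\ell_G(f))^s\norm{f}_2$ by exploiting the measurable identification $G\cong N\times Q$ together with the decomposition $L^2(G)=L^2(Q,L^2(N))$, which is available because (as recorded above) the right Haar measure of $G$ is the product of those of $N$ and $Q$. Set $R=\ell_G(f)$; by hypothesis~(4) the support of $f$, written in the coordinates $(n,q)$, is contained in $\{(n,q):\ell_N(n)\le DR^r,\ \ell_Q(q)\le DR^r\}$, so both fiber directions are confined to polynomially sized regions.

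First I would present $T_f$ as a field of operators over $Q$. Writing $\xi\in L^2(Q,L^2(N))$ as $q\mapsto\xi(q)\in L^2(N)$, the operator takes the form $(T_f\xi)(p)=\int_Q A_{p,q}\,\xi(q)\,dq$ with each block $A_{p,q}\in B(L^2(N))$ determined by $f$ restricted to the fiber over $pq^{-1}$, transported by the automorphism $\theta(p)$ and the cocycle $\beta$ according to~(\ref{eq:def-mult-ext}). A direct computation shows that $A_{p,q}$ is, up to composition with two unitaries on $L^2(N)$ — the substitution induced by $\theta(p)$ and a translation by an element of $N$ coming from $\beta$ — the convolution operator on $N$ with symbol $n\mapsto f(n,pq^{-1})$. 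This is precisely where unimodularity enters: since $N$ and $Q$ have property RD they are unimodular by \cite{Ronghui1996}, and $G$ is unimodular by hypothesis~(3), so Lemma~\ref{thm:unimodularity-conditions} guarantees that the $\theta(p)$ are measure-preserving; hence the substitution is genuinely unitary and the block norm equals $\norm{A_{p,q}}_{op}=\psi(pq^{-1})$, where $\psi(q):=\norm{f(\cdot,q)}_{op}$ is the convolution operator norm on $N$ of the fiber function.

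The second step is the operator-valued domination $\norm{f}_{op}\le\norm{\psi}_{op}$, where on the right $\psi$ is regarded as a function on $Q$ and $\norm{\psi}_{op}$ is its convolution operator norm on $L^2(Q)$. This follows from a Cauchy--Schwarz estimate over the fibers: for $\xi,\eta\in L^2(Q,L^2(N))$ one bounds $\abs{\langle T_f\xi,\eta\rangle}$ by $\int_Q\int_Q\psi(pq^{-1})\,\norm{\xi(q)}\,\norm{\eta(p)}\,dq\,dp$, and the right-hand side is the pairing of $T_\psi$ against the scalar functions $q\mapsto\norm{\xi(q)}$ and $p\mapsto\norm{\eta(p)}$, whose $L^2(Q)$-norms coincide with $\norm{\xi}_2$ and $\norm{\eta}_2$. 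Only the pointwise bound $\norm{A_{p,q}}_{op}\le\psi(pq^{-1})$ is used here, so the twisting by $\theta$ and $\beta$ causes no difficulty.

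It then remains to feed in the two property-RD hypotheses. Property RD for $Q$ gives $\norm{\psi}_{op}\le C_Q(1+\ell_Q(\psi))^{s_Q}\norm{\psi}_2$; since $\psi(q)\ne0$ forces the fiber over $q$ to meet the support of $f$, hypothesis~(4) yields $\ell_Q(\psi)\le DR^r$. For the $L^2$-factor, property RD for $N$ gives fiberwise $\psi(q)\le C_N(1+\ell_N(f(\cdot,q)))^{s_N}\norm{f(\cdot,q)}_2$ with $\ell_N(f(\cdot,q))\le DR^r$ again by~(4); integrating the square over $Q$ and using $\int_Q\norm{f(\cdot,q)}_2^2\,dq=\norm{f}_2^2$ gives $\norm{\psi}_2\le C_N(1+DR^r)^{s_N}\norm{f}_2$. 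Combining the three displays produces $\norm{f}_{op}\le C(1+R)^{r(s_N+s_Q)}\norm{f}_2$ with $R=\ell_G(f)$, which is exactly property RD for $G$. The main obstacle I anticipate is purely technical: justifying the field-of-operators picture in the locally compact setting — measurability of $q\mapsto\psi(q)$ and of the blocks $A_{p,q}$, the extension of the $Q$-estimate to the merely Borel compactly supported symbol $\psi$ by density, and the direct-integral manipulations in the domination step — none of which is conceptually deep, whereas in Jolissaint's discrete setting all of these collapse to a countable matrix computation.
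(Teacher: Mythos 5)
Your argument is correct and is essentially the paper's own proof in different packaging: the paper bounds $\norm{f*g}_2$ for a test function $g$ by writing $f*g(n,q)=\int_Q f_p*g_{p,q}(n)\,dp$, applying RD of $N$ to each fiber convolution and then recognizing the remaining expression as a convolution $\phi*\psi$ on $Q$ to which RD of $Q$ applies --- which is exactly your kernel domination $\norm{f}_{op}\le\norm{\psi}_{op}$ unwound against a test function, with the two RD estimates applied in the opposite order and with fiber $L^2$-norms in place of fiber operator norms. The technical points you flag (unimodularity making the twisting by $\theta$ and $\beta$ unitary, measurability of the fiber data, and extending the $Q$-estimate to Borel compactly supported symbols) are precisely the ones the paper's computation also relies on, mostly implicitly.
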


\begin{proof}
  Let $f,g\in C_c(G)$. We get, using unimodularity of $N$ and $G$, that
  \begin{equation}
    \begin{split}
      f*g&(n,q) = \int_Q\int_N f((m,p)^{-1})g((m,p)(n,q))\,dmdp = \\
      = & \int_Q\int_N f(\theta(p)^{-1}(m^{-1}\beta(p,p^{-1})^{-1}),
      p^{-1}) g(m\theta(p)(n)\beta(p,q),pq)\,dmdp =\\
      = & \Delta_N(\theta(p)^{-1})\int_Q\int_N f_p(m^{-1})
      g_{p,q}(mn)\,dmdp = \int_Q f_p*g_{p,q}(n)\,dp,
    \end{split}
  \end{equation}
  where
  \begin{equation}
    f_p=f(m,p^{-1}),
  \end{equation}
  and
  \begin{equation}
    g_{p,q}(m) = g(\beta(p,p^{-1})^{-1}\theta(p)(m)\beta(p,q),pq).
  \end{equation}
  Now, let $N$ and $Q$ satisfy property RD with constants $C$ and
  $s$. Using the triangle inequality, we may estimate the norm of
  $f*g$ by
  \begin{equation}
    \begin{split}
      \norm{f*g}_2^2 & \leq  \int_Q \left[ \int_Q \lVert f_p * g_{p,q}
        \rVert_2 \,dp \right]^2\, dq \leq \\ 
      & \leq  \int_Q\left[ C \int_Q (1 + \ell_N(f_p))^s\lVert f_p
        \rVert_2 \lVert g_{p,q} \rVert_2 \,dp\right]^2 \,dq \leq \\
      & \leq  C^2(1+D\ell_G(f)^r)^{2s} \int_Q \left[ \int_Q \lVert
        f_p\rVert_2 \lVert g_{p,q} \rVert_2\, dp\right]^2 \,dq
    \end{split}
  \end{equation}
  If we put
  \begin{equation}
    \phi(p) = \lVert{ f_{p^{-1}}}\rVert_2
  \end{equation}
  and
  \begin{equation}
    \psi(p) = \left[\int_N \lvert g(m,p)\rvert^2\,dm\right]^{1/2},
  \end{equation}
  we have, using unimodularity of $N$, that $\lVert g_{p,q}\rVert_2 =
  \psi(pq)$, and therefore
  \begin{equation}
    \begin{split}
      \norm{f*g}_2 & \leq C(1+D\ell_G(f)^r)^{s}\lVert \phi * \psi
      \rVert_2 \leq \\
      & \leq
      C^2(1+D\ell_G(f)^r)^{s}(1+\ell_Q(\phi))^s\norm{\phi}_2\norm{\psi}_2
      \leq \\
      & \leq C^2 (1+D\ell_G(f)^r)^{2s}\norm{f}_2\norm{g}_2 \leq
      C'(1+\ell_G(f))^{2rs} \norm{f}_2\norm{g}_2
    \end{split}
  \end{equation}
  for some constant $C'>0$.
\end{proof}

Now, it turns out that if we use the cross-section from
Remark~\ref{thm:good-section}, the inequality in
Lemma~\ref{thm:length-on-extension} is satisfied by the restriction of
the word-length of $G$.

\begin{lemma}
  \label{thm:final-length-inequality}
  If in the short exact sequence $1\to N\to G\to Q\to 1$ of compactly
  generated groups, where $G$ is endowed with a word-length $\ell_G$
  corresponding to a relatively compact generating set $S$, and $Q$ is
  endowed with the word-length $\ell_Q$ corresponding to $\pi(S)$, the
  group $G$ is identified with $N\times Q$ using a cross-section
  $\sigma\colon Q\to G$ satisfying $\ell_G(\sigma(q))=\ell_Q(q)$ for
  all $q\in Q$, then for all $(n,q)\in G$ we have
  \begin{equation}
    \ell_G(n)+\ell_Q(q) \leq 3 \ell_G(n,q).
  \end{equation}
\end{lemma}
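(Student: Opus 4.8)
The plan is to prove both halves of the bound separately and then add them, exploiting the defining property of the cross-section, namely $\ell_G(\sigma(q))=\ell_Q(q)$, at the single crucial point. Throughout I use the identification $(n,q)\mapsto n\sigma(q)$, so that $\ell_G(n,q)$ denotes $\ell_G(n\sigma(q))$.

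First I would control $\ell_Q(q)$. Since $n\in N=\ker\pi$ and $\pi\circ\sigma=\mathrm{id}$, we have $\pi(n\sigma(q))=q$. The quotient map sends the generating set $S$ onto $\pi(S)$, so $\pi$ does not increase word-length, giving $\ell_Q(\pi(g))\leq\ell_G(g)$ for every $g\in G$. Applying this to $g=n\sigma(q)$ yields
\begin{equation}
  \ell_Q(q)=\ell_Q(\pi(n\sigma(q)))\leq\ell_G(n,q).
\end{equation}

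Next I would control $\ell_G(n)$ by writing $n=(n\sigma(q))\sigma(q)^{-1}$ and invoking the triangle inequality together with the symmetry $\ell_G(\sigma(q)^{-1})=\ell_G(\sigma(q))$:
\begin{equation}
  \ell_G(n)\leq\ell_G(n\sigma(q))+\ell_G(\sigma(q))=\ell_G(n,q)+\ell_G(\sigma(q)).
\end{equation}
This is the step where the special section does its work: by hypothesis $\ell_G(\sigma(q))=\ell_Q(q)$, and by the first half this is at most $\ell_G(n,q)$, so $\ell_G(n)\leq 2\,\ell_G(n,q)$. Adding the two estimates gives $\ell_G(n)+\ell_Q(q)\leq 3\,\ell_G(n,q)$, as claimed.

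There is no real obstacle here; the argument is a short sequence of triangle-inequality manipulations. The only conceptual point to keep in view is that everything hinges on the cross-section supplied by Remark~\ref{thm:good-section}: without the equality $\ell_G(\sigma(q))=\ell_Q(q)$ one could only bound $\ell_G(\sigma(q))$ by a length distortion function, which is precisely the polynomial-amplitude hypothesis that this choice of $\sigma$ lets us discard. Thus the substance of the lemma is the existence of such a section, already established, rather than the inequality itself.
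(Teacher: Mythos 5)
Your proof is correct and is essentially the same as the paper's: both decompose $n=(n\sigma(q))\sigma(q)^{-1}$ and apply the triangle inequality, use $\ell_G(\sigma(q))=\ell_Q(q)$, and bound $\ell_Q(q)\leq\ell_G(n,q)$ via the length-nonincreasing quotient map. You merely make explicit the step $\ell_Q(q)\leq\ell_G(n,q)$, which the paper uses silently in its final inequality.
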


\begin{proof}
  Let $(n,q)\in G$. We have
  \begin{multline}
    \ell_G(n)+\ell_Q(q) = \ell_G((n,q)\sigma(q)^{-1}) + \ell_Q(q) \leq\\\leq
    \ell_G(n,q) + 2\ell_Q(q) \leq 3\ell_G(n,q).\qedhere
  \end{multline}
\end{proof}

All these considerations sum up to the proof of our main theorem.

\begin{proof}[Proof of Theorem~\ref{thm:main-thm}]
  Suppose that $N$ has property RD with respect to the restriction of
  $\ell_G$, and $Q$ has property RD with respect to $\ell_Q$. By
  Remark~\ref{thm:good-section} there exists a cross-section
  $\sigma\colon Q\to G$ such that $\ell_G(\sigma(q))=\ell_Q(\sigma)$
  for all $q\in Q$. Then, by Lemma~\ref{thm:final-length-inequality}, the
  inequality $\ell_N(n)+\ell_Q(q) \leq 3\ell_G(n,q)$ is
  satisfied. Moreover, by Corollary~\ref{thm:unimodularity-extension},
  the group $G$ is unimodular. Hence,
  Lemma~\ref{thm:length-on-extension} applies, and $G$ has property RD.
\end{proof}

\section{Final remarks}
\label{sec:final-remarks}

In Theorem~\ref{thm:main-thm} we required $N$ to have
property RD with respect to the restriction of the word-length on
$G$. It might be tempting to ask whether this can be weakened to
having property RD with respect to a word-length on $N$. Such a
strengthening is easily seen to be false. It
suffices to consider any metabelian group with exponential growth,
e.g.\ the Baumslag-Solitar group $\langle a,b \mid
bab^{-1}=a^2\rangle$. Such a group is amenable, and by
\cite[Corollary 3.1.8]{Jolissaint1990}, no amenable group with
superpolynomial growth can satisfy property RD.

Even in the case of finitely generated groups,
Theorem~\ref{thm:main-thm} is strictly stronger than Jolissaint's
result, which assumes that $N$ has property RD with respect to its own
word-length $\ell_N$ associated to a generating set $S_N$, and the
associated functions $\theta$ and $\beta$ satisfy the inequalities
\begin{equation}
  \label{eq:poly-amp}
  \ell_N(\beta(p,q)) \leq A(1+\ell_Q(p))^\alpha(1+\ell_Q(q))^\alpha
\end{equation}
and
\begin{equation}
  \label{eq:poly-growth}
  \ell_N(\theta(q)(s)) \leq B(1+\ell_Q(q))^\beta
\end{equation}
for some $A,B,\alpha,\beta>0$. As the next proposition shows,
existence of a cross-section $\sigma$ for which $\beta$ and $\theta$
satisfy these conditions is equivalent to $N$ being \emph{polynomially
distorted} in $G$, i.e.\ to the estimate
\begin{equation}
  \label{eq:poly-dist}
  \ell_N(n)\leq C(1+\ell_G(n))^r
\end{equation}
for some $C,r>0$.  

\begin{proposition}
  \label{prop:polynomial-distortion}
  For a short exact sequence $1\to N\to G\to Q\to 1$ of finitely
  generated groups the following conditions are equivalent
  \begin{enumerate}
  \item there exists a section $\sigma\colon Q\to G$ of $\pi$ such
    that the corresponding functions $\beta$ and $\theta$ satisfy conditions~\eqref{eq:poly-amp} and \eqref{eq:poly-growth};
  \item $N$ has polynomial distortion in $G$.
  \end{enumerate}
\end{proposition}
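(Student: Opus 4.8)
The plan is to prove both implications, the second being routine while the first carries the technical weight. Throughout I would write elements of $G$ in the coordinates $N\times Q$ supplied by the section $\sigma$ and use the multiplication rule~\eqref{eq:def-mult-ext}.

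For the direction $(2)\then(1)$, I would take the cross-section $\sigma$ from Remark~\ref{thm:good-section}, so that $\ell_G(\sigma(q))=\ell_Q(q)$ for all $q$. Since $\beta(p,q)=\sigma(p)\sigma(q)\sigma(pq)^{-1}$ lies in $N$, subadditivity of $\ell_G$ together with $\ell_Q(pq)\le\ell_Q(p)+\ell_Q(q)$ gives $\ell_G(\beta(p,q))\le 2(\ell_Q(p)+\ell_Q(q))$. Polynomial distortion~\eqref{eq:poly-dist} then turns this into a bound of the form $\ell_N(\beta(p,q))\le C(1+2\ell_Q(p)+2\ell_Q(q))^r$, which, using $1+2a+2b\le 2(1+a)(1+b)$, is readily absorbed into the product form~\eqref{eq:poly-amp} with $\alpha=r$. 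Likewise, for a generator $s\in S_N$ we have $\ell_G(\theta(q)(s))\le 2\ell_Q(q)+M$, where $M=\max_{s\in S_N}\ell_G(s)$ is finite, and~\eqref{eq:poly-dist} yields~\eqref{eq:poly-growth} with $\beta=r$.

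The substantive direction is $(1)\then(2)$. Given such a section $\sigma$, I would fix $n\in N$ and write it as a geodesic word $n=g_1\cdots g_k$ with $g_i\in S^{\pm1}$ and $k=\ell_G(n)$. Writing each generator in coordinates as $g_i=(m_i,q_i)$ with $q_i=\pi(g_i)$, finiteness of $S$ forces the $m_i=g_i\sigma(q_i)^{-1}$ to range over a finite subset of $N$, so $\ell_N(m_i)\le L$ for some constant $L$, while $\ell_Q(q_i)\le1$. Iterating~\eqref{eq:def-mult-ext} and using that the partial products $p_j=q_1\cdots q_j$ satisfy $\ell_Q(p_j)\le k$, I would expand $n$ as a product of $2k$ factors alternating between $\theta(p_j)(m_{j+1})$ and $\beta(p_j,q_{j+1})$; subadditivity of $\ell_N$ then reduces the estimate to bounding each factor.

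The bound on the $\beta$-factors is immediate from~\eqref{eq:poly-amp}: each contributes at most $A\,2^\alpha(1+k)^\alpha$. The main obstacle is the $\theta$-factors, since~\eqref{eq:poly-growth} controls $\theta(q)$ only on single generators of $N$, whereas $m_{j+1}$ is a general element of length at most $L$. I would resolve this by writing $m_{j+1}$ as a product of at most $L$ elements of $S_N^{\pm1}$ and using that $\theta(p_j)$ is a homomorphism, together with $\ell_N(\theta(q)(s^{-1}))=\ell_N(\theta(q)(s))$, to obtain $\ell_N(\theta(p_j)(m_{j+1}))\le LB(1+k)^\beta$. Summing the $k$ factors of each type then gives $\ell_N(n)\le k\bigl(A\,2^\alpha(1+k)^\alpha+LB(1+k)^\beta\bigr)$, which is bounded by a constant times $(1+\ell_G(n))^{\max(\alpha,\beta)+1}$, establishing polynomial distortion with exponent $r=\max(\alpha,\beta)+1$.
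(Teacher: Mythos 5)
Your proof is correct, and while the easy direction $(2)\then(1)$ coincides with the paper's argument (choose $\sigma$ with $\ell_G(\sigma(q))=\ell_Q(q)$, bound $\ell_G(\beta(p,q))$ and $\ell_G(\theta(q)(s))$ by subadditivity, then apply~\eqref{eq:poly-dist}), your treatment of $(1)\then(2)$ is genuinely different. The paper first replaces the generating set by $S_G=S_N\cup\sigma(S_Q)$ and then runs an induction on $\ell_G(n)$: a minimal word either consists entirely of letters from $\sigma(S_Q)$, in which case a telescoping cocycle identity gives the bound directly, or it contains a letter $s_{m+1}\in S_N$, which is extracted as a conjugate $(s_1\cdots s_m)s_{m+1}(s_1\cdots s_m)^{-1}$ and estimated separately while the induction hypothesis handles the shortened word $n'$. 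You avoid both the special generating set and the induction: writing each letter of an arbitrary geodesic as $(m_i,q_i)$ and iterating~\eqref{eq:def-mult-ext} once yields the full expansion $n\sigma(1)^{-1}=m_1\prod_{j}\theta(p_j)(m_{j+1})\beta(p_j,q_{j+1})$ in one stroke, and the only new point you need --- that the $N$-components $m_i$ of generators lie in a finite set, so each $\theta(p_j)(m_{j+1})$ splits into at most $L$ generator images controlled by~\eqref{eq:poly-growth} --- substitutes for the paper's conjugation-and-induction step. The two arguments rest on the same cocycle identity (your expansion specializes to the paper's~\eqref{eq:polydist-proof-3} when all $m_i=1$), but yours is a single uniform computation with an explicit exponent $\max(\alpha,\beta)+1$, whereas the paper's fixes its constants only at the end of the induction; the cost is only the small bookkeeping you already note (the finite set of $N$-components, the $\ell_N(\sigma(1))$ correction, and the harmless discrepancy between $\pi(S)$ and a given $S_Q$, absorbed by equivalence of word-lengths).
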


\begin{proof}
  To show $(1)\then (2)$, assume that inequalities~\eqref{eq:poly-amp}
  and~\eqref{eq:poly-growth} hold for functions $\beta$ and $\theta$
  associated with a section $\sigma$. Observe that these conditions
  are independent of the choice of particular finite generating sets
  and corresponding word-lengths. Hence, without loss of generality,
  we may choose generating sets satisfying
  $S_G=S_N\cup\sigma(S_Q)$. We will prove~\eqref{eq:poly-dist} by
  induction on $\ell_G(n)$. The constants $C$ and $r$ will be fixed
  later.
  
  Take $n\in N$. If $n=1$, inequality~\eqref{eq:poly-dist} is
  satisfied for any $C$ and $r$, so assume that $n\ne 1$, and let
  $n=s_1s_2\cdots s_k$ be a minimal representation in $S_G$. First,
  suppose that $s_1,\dots,s_k\in \sigma(S_Q)$, and write
  $s_i=\sigma(q_i)$ with $q_i\in S_Q$. Observe that $q_1\cdots q_k=\pi(n)=1$, and therefore
  \begin{equation}
    \label{eq:polydist-proof-3}
    \begin{split}
      s_1\cdots s_k & = \left(\prod_{i=1}^{k-1} \beta(q_1\cdots q_i,
        q_{i+1})\right) \sigma(q_1\cdots q_k) \\
      & =\left(\prod_{i=1}^{k-1}
        \beta(q_1\cdots q_i, q_{i+1})\right) \sigma(1),
    \end{split}
  \end{equation}
  so 
  \begin{equation}
    \label{eq:eq-polydist-proof-3a}
    \ell_N(n)\leq \sum_{i=1}^{k-1} A(1+i)^\alpha 2^\alpha +\ell_N(\sigma(1)) \leq D_1(1+k)^{\gamma_1}.
  \end{equation}

  Now, assume that $s_1,\dots,s_m \in \sigma(S_Q)$ and $s_{m+1}\in
  S_N$ for some $m<k$. We then have
  \begin{equation}
    \label{eq:polydist-proof-1}
    n=(s_1\cdots s_m)s_{m+1}(s_1\cdots s_m)^{-1}n',
  \end{equation}
  with $n' = s_1\cdots \hat{s}_{m+1}\cdots s_k$ satisfying $\ell_G(n')\leq k-1$, and obtain
  \begin{equation}
    \begin{split}
      \label{eq:polydist-proof-2}
      \ell_N(n) & \leq \ell_N(n') + \ell_N((s_1\cdots
      s_m)s_{m+1}(s_1\cdots s_m)^{-1}) \\
      & \leq Ck^r + \ell_N((s_1\cdots s_m)s_{m+1}(s_1\cdots
      s_m)^{-1}).
    \end{split}
  \end{equation}
  To estimate the second summand, we proceed similarly as in the previous
  case. Write $s_i=\sigma(q_i)$ for $i=1,\dots,m$ and observe that,
  using the first part of~\eqref{eq:polydist-proof-3}, we get
  \begin{equation}
    \label{eq:polydist-proof-4}
    \begin{split}
      \ell_N((s_1\cdots s_m)&s_{m+1}(s_1\cdots s_m)^{-1}) \leq\\
      &\leq2\sum_{i=1}^{m-1}\ell_N(\beta(q_1\cdots q_i,q_{i+1})) +
      \ell_N(\theta(q_1\cdots q_m)(s_{m+1}))\leq\\
      &\leq 2\sum_{i=1}^{m-1} A(1+i)^\alpha 2^\alpha +
      B(1+m)^\beta\leq D_2k^{\gamma_2}. 
    \end{split}
  \end{equation}
  If we take $C=\max\{D_1,D_2\}$ and $r=1+\max\{\gamma_1,\gamma_2\}$, we finally get
  \begin{equation}
    \label{eq:polydist-proof-5}
    \ell_N(n)\leq Ck^r+D_2k^{r-1} \leq C(1+k)^r,
  \end{equation}
  which, together with~\eqref{eq:eq-polydist-proof-3a}, ends the inductive step and the proof of $(1)\then (2)$.

  In order to prove the implication $(2)\then (1)$, suppose that $N$
  is polynomially distorted in $G$, and take a section $\sigma\colon
  Q\to G$ and generating sets $S_N$, $S_G$, and $S_Q$, such that
  $S_G=S_N\cup \sigma(S_Q)$ and $\ell_G(\sigma(q))=\ell_Q(q)$ for all
  $q\in Q$. We have
  \begin{equation}
    \label{eq:polydist-proof-6}
    \begin{split}
      \ell_N(\beta(p,q)) & \leq
      C(1+\ell_G(\sigma(p)\sigma(q)\sigma(pq)^{-1}))^r \leq \\ 
      & \leq C(1 + 2\ell_Q(p)+2\ell_Q(q))^r \leq
      2^rC(1+\ell_Q(p))^r(1+\ell_Q(q))^r
    \end{split}
  \end{equation}
  and
  \begin{equation}
    \label{eq:polydist-proof-7}
    \ell_N(\theta(q)s)\leq C(1+\ell_G(\sigma(q)s\sigma(q)^{-1}))^r \leq 2^rC(1+\ell_Q(q))^r,
  \end{equation}
  which completes the proof.
\end{proof}

In particular, polynomial distortion of $N$ implies that its
word-length is dominated by the restriction of the word-length of $G$,
so the assumptions of Jolissaint can be formulated equivalently as
requiring $N$ to be polynomially distorted in $G$, and satisfy
property RD with respect to the restriction of the word-length of $G$.

In~\cite{Brady2013} and \cite{Brinkmann2000} the authors construct
hyperbolic semidirect products $N\rtimes \Z$ with $N$ free, such that
the distortion of $N$ is superpolynomial. Such extensions do not fall
into the scope of Jolissaint's theorem, while they still satisfy
assumptions of Theorem~\ref{thm:main-thm}. Indeed, since
$G=N\rtimes\Z$ is hyperbolic, it satisfies property RD by
\cite{MR972078}, and therefore $N$ has RD with respect to the
restriction of the word-length of $G$. This example can be seen as
somewhat unsatisfactory, as we already know that $G$ has RD, and we
use this to show that $N$ has RD with respect to the restricted
length.

Apart from being a subgroup of a group with property RD, the only
other criterion for having RD with respect to a length not equivalent
to a word-length that we are aware of states that an amenable group
has property RD with respect to a length $\ell$ if and only if it has
polynomial growth with respect to $\ell$. Therefore, one way to
construct a potentially nontrivial example leads through solving the
following problem.

\begin{problem}
  Construct a finitely generated group $G$ with an amenable normal
  subgroup $N$, which is superpolynomially distorted, but its relative
  growth in $G$ is polynomial, and such that the quotient $G/N$ has
  property RD.
\end{problem}
 
Of course, it would be best to construct such a group which does not
satisfy the assumptions of other known criteria for property RD, such
as hyperbolicity or admitting a proper cocompact action on a CAT(0)
cube complex.

\bibliographystyle{plain}
\bibliography{rd-ext}

\bibliographystyle{plain}
\bibliography{rd-ext.bib}

\begin{thebibliography}{10}

\bibitem{Brady2013}
Noel Brady, Will Dison, and Timothy Riley.
\newblock {Hyperbolic hydra}.
\newblock {\em Groups, Geom. Dyn.}, 7(4):961--976, 2013.

\bibitem{Brinkmann2000}
P.~Brinkmann.
\newblock {Hyperbolic automorphisms of free groups}.
\newblock {\em Geom. Funct. Anal.}, 10(5):1071--1089, December 2000.

\bibitem{Chatterji2007}
I.~Chatterji, C.~Pittet, and L.~Saloff-Coste.
\newblock {Connected Lie groups and property RD}.
\newblock {\em Duke Math. J.}, 137(3):511--536, April 2007.

\bibitem{ChatterjiPhD}
Indira Chatterji.
\newblock {\em {On Property (RD) for Certain Discrete Groups}}.
\newblock PhD thesis, ETH Zurich, 2001.

\bibitem{Connes1990}
Alain Connes and Henri Moscovici.
\newblock {Cyclic cohomology, the Novikov conjecture and hyperbolic groups}.
\newblock {\em Topology}, 29(3):345--388, January 1990.

\bibitem{MR972078}
Pierre de~la Harpe.
\newblock Groupes hyperboliques, alg\`ebres d'op\'erateurs et un th\'eor\`eme
  de {J}olissaint.
\newblock {\em C. R. Acad. Sci. Paris S\'er. I Math.}, 307(14):771--774, 1988.

\bibitem{Haagerup1978}
Uffe Haagerup.
\newblock {An example of a non nuclear $C^*$-algebra, which has the metric
  approximation property}.
\newblock {\em Inventiones Mathematicae}, 50(3):279--293, October 1978.

\bibitem{Ronghui1996}
Ronghui Ji and Larry~B. Schweitzer.
\newblock {Spectral invariance of smooth crossed products, and rapid decay
  locally compact groups}.
\newblock {\em K-Theory}, 10(3):283--305, May 1996.

\bibitem{Jolissaint1989}
Paul Jolissaint.
\newblock {$K$-theory of reduced $C^*$-algebras and rapidly decreasing
  functions on groups}.
\newblock {\em K-Theory}, 2(6):723--735, 1989.

\bibitem{Jolissaint1990}
Paul Jolissaint.
\newblock {Rapidly Decreasing Functions in Reduced $C^*$-Algebras of Groups}.
\newblock {\em Transactions of the American Mathematical Society},
  317(1):167--196, 1990.

\bibitem{Mackey1952}
George~W. MacKey.
\newblock {Induced Representations of Locally Compact Groups I}.
\newblock {\em Ann. Math.}, 55(1):101--139, 1952.

\bibitem{Schweitzer1993}
Larry~B. Schweitzer.
\newblock {Dense $m$-convex Fréchet subalgebras of operator algebra crossed
  products by Lie groups}.
\newblock {\em Int. J. Math.}, 4(4):601--673, 1993.

\end{thebibliography}
\end{document}